\newcommand{\pair}[2]{\langle #1 , #2 \rangle}
\newcommand{\arr}{\rightarrow}
\newcommand{\Pow}{\mathcal{P}}
\newcommand{\Set}{{\mathbf{Set}}}
\newcommand{\Sub}{{\mathsf{Sub}}}
\def\lbr{\mathopen{{[\kern-0.14em[}}}   
\def\rbr{\mathclose{{]\kern-0.14em]}}}  
\newcommand{\inc}{\hookrightarrow}
\newcommand{\strarr}{\mbox{$\circ \kern-0.4em \arr$}}
\newcommand{\eps}{\,\varepsilon\,}
\newcommand{\neps}{\; \slash \kern-0.7em \eps \,}
\newcommand{\A}{{\mathcal{A}}}
\newcommand{\E}{{\mathcal{E}}}
\newcommand{\F}{{\mathcal{F}}}
\newcommand{\id}{{\mathrm{id}}}
\newcommand{\opp}{{\mathsf{op}}}
\newcommand{\forget}[1]{}
\newcommand{\mono}{\rightarrowtail}
\newcommand{\epi}{\twoheadrightarrow}
\newcommand{\Sh}{\mathsf{Sh}}
\newcommand{\Sm}{{\mathcal{S}}}
\newcommand{\Eff}{\mathcal{E}\kern-0.14em\mathit{ff}}
\newcommand{\Id}{{\mathrm{Id}}}
\newcommand{\Xcal}{{\mathcal{X}}}
\newcommand{\schloop}{{< \kern-0.40em  <}}
\DeclareRobustCommand{\coprod}{\mathop{\text{\fakecoprod}}}
\newcommand{\fakecoprod}{%
  \sbox0{$\prod$}%
  \smash{\raisebox{\dimexpr.9625\depth-\dp0}{\scalebox{1}[-1]{$\prod$}}}%
  \vphantom{$\prod$}%
}
\newtheorem{thm}{Theorem}[section]
\newtheorem{lem}[thm]{Lemma}
\newtheorem{defn}[thm]{Definition}
\begin{document}

\title{Triposes as a Generalization of Localic Geometric Morphisms}

\author{Jonas Frey and Thomas Streicher}


\title{Triposes as a Generalization of Localic Geometric Morphisms}

\begin{abstract}
In \cite{HJP} Hyland, Johnstone and Pitts introduced the notion of 
\emph{tripos} for the purpose of organizing the construction of realizability 
toposes in a way that generalizes the construction of localic toposes from 
complete Heyting algebras. In \cite{Pit02} one finds a generalization of this 
notion eliminating an unnecessary assumption of \cite{HJP}.

The aim of this paper is to characterize triposes over a base topos $\Sm$
in terms of so-called \emph{constant objects} functors from $\Sm$ to some
elementary topos. Our characterization is slightly different from the one in 
Pitts's PhD Thesis \cite{Pit81} and motivated by the fibered view of geometric 
morphisms as described in \cite{Str}. In particular, we discuss the question
whether triposes over $\Set$ giving rise to equivalent toposes are already 
equivalent as triposes.
\end{abstract}

\maketitle

\begin{center}
\emph{Dedicated to the Memory of Martin Hofmann}
\end{center}

\section{Introduction}

As described in \cite{Joh} localic geometric morphisms to a topos $\Sm$
are given by functors $F$ from $\Sm$ to some topos $\E$ such that
\begin{itemize}
\item[(1)] $F$ preserves finite limits,
\item[(2)] every object $A\in\E$ appears as subquotient of some $FI$ and
\item[(3)] $F$ has a right adjoint.
\end{itemize}
In the appendix of \cite{Str} one finds a proof of M.~Jibladze's Theorem \cite{Jib}
saying that fibered toposes over $\Sm$ having internal sums correspond to 
finite limit preserving functors from $\Sm$ to some topos $\E$. In particular, a 
finite limit preserving functor $F : \Sm\to\E$ gives rise to the fibration 
$P_F = F^*P_\E$ over $\Sm$ obtained by change of base along $F$ from the
fundamental (``codomain'') fibration $P_\E = \mathsf{cod} : \E^{\mathbbm{2}}\to \E$
(where $\mathbbm{2}$ is the small category $0 \to 1$ corresponding to the ordinal $2$).
But every fibered topos $P : \Xcal\to\Sm$ with internal sums is equivalent to $P_\Delta$
where $\E$ is the fiber of $P$ over $1$ and $\Delta : \Sm\to\E$ sends 
$I\in\Sm$ to $\Delta(I) = \coprod_I 1_I$ in $\E$.

Moreover, as also shown in \cite{Str} for terminal object preserving $F :
\Sm\to\E$ the fibration $P_F$ is locally small iff $F$ has a right adjoint.
Thus, as  observed by J.~B\'enabou already in his 1974 Montreal
lectures~\cite{benabou1974logique}, inverse image parts of geometric morphisms
correspond to terminal object preserving functors $F$ between toposes such that
the fibration $P_F$ has internal sums and is locally small.

Moreover, as also observed in \cite{Str} for a finite limit preserving functor
$F : \Sm\to\E$ between toposes condition (2) is equivalent to the requirement that
every map $a : A \to FI$ in $\E$ fits into a commuting diagram
\begin{diagram}
  C & \rOnto^e & A \\
  \dEmbed^{m} & & \dTo_a \\
  FJ & \rTo_{Fu} & FI
\end{diagram}
where $e$ is epic and $m$ is monic. Obviously, this condition entails (2) instantiating
$I$ by a terminal object. For the reverse direction choose $m : C \mono FJ$ and
$e : C \epi A$ (which exist by condition (2)) and observe that
\begin{diagram}
  & & C & \rOnto^e & A \\
  & \ldEmbed^{m} & \dEmbed_{\pair{m}{ae}} & & \dTo_a \\
  FJ & \lTo_{F\pi_1} & F(J{\times}I) & \rTo_{F\pi_2} & FI
\end{diagram}
using the assumption that $F$ preserves finite limits and thus finite products.
Thus, condition (2) amounts to the requirement that every object of $\E$ can be covered
by a(n internal) sum of subterminals (in the appropriate fibrational sense!). 
As observed in \cite{Str} under assumption (3) this is equivalent to the requirement
that $g$ in
\begin{diagram}
  G \SEpbk & \rTo & 1_\E \\
  \dEmbed^g & & \dEmbed_{\top_\E} \\
  FU\Omega_\E & \rTo_{\varepsilon_{\Omega_\E}} & \Omega_\E
\end{diagram}
is a generating family for the fibration $P_F$ (where $U$ is right adjoint to $F$).

\section{A fibrational account of triposes}

In \cite{HJP} Hyland, Johnstone and Pitts have identified a notion of fibered preorder 
$\mathscr{P}$ over a base topos
$\Sm$ giving rise to a topos $\Sm[\mathscr{P}]$ by ``adding subquotients''
related to the base topos via a \emph{constant object functor}
$\Delta_{\mathscr{P}} : \Sm \to \Sm[\mathscr{P}]$ satisfying conditions (1) and
(2) of the previous section. As obvious from the considerations in
\emph{loc.cit.}\ one may get back the fibered preorder $\mathscr{P}$ from the
subobject fibration $\Sub_{\Sm[\mathscr{P}]}$ by change of base along
$\Delta_{\mathscr{P}}$. For this reason such $\mathscr{P}$ were called
``\underline{t}opos \underline{r}epresenting \underline{i}ndexed
\underline{p}re-\underline{o}rdered \underline{s}ets'' suggesting the acronym
``tripos'' (echoing the traditional name for final exams at University of
Cambridge). 

The original definition in \cite{HJP} required triposes to have a \emph{generic family}, i.e.\ a
$T \in \mathscr{P}(\Sigma)$ from which all objects in $\mathscr{P}(I)$ may be
obtained (up to isomorphism) by reindexing along an appropriate (generally not unique)
map $I\to\Sigma$.
In \cite{Pit02}, Pitts observed that the requirement of a generic family can be replaced by a weaker 
`comprehension axiom'~\cite[Axiom~4.1]{Pit02} which still implies -- and is actually equivalent to -- the fact that the category
$\Sm[\mathscr{P}]$ of partial equivalence relations is a topos\footnote{The definition of tripos in~\cite{Pit02} 
generalizes that of~\cite{HJP}
also 
in the sense that it admits arbitrary finite-product categories as 
base categories, and requires the Beck-Chevalley condition (BC) only for
certain pullback squares definable from finite products. 
However, in the present work we consider only toposes as base categories, and all triposes
satisfy BC for arbitrary pullback squares.}.

In this work we focus on triposes in this more general sense, and contrary to the literature we use the word `tripos' not for the fibered preorder, but for the associated constant objects functor (from which the fibered preorder can be reconstructed as pointed out above). This way we obtain a definition that is a straightforward generalization of the notion of localic geometric morphism as presented in the introduction. Pitts's comprehension axiom is then presented as Lemma~\ref{hopitts}. We refer to (constant objects functors arising from) the more restrictive notion of~\cite{HJP} as ``traditional triposes'':

\begin{defn}\label{tripdef}
A \emph{tripos} over a topos $\Sm$ is a finite limit preserving functor $F$ from $\Sm$
to a topos $\E$ such that every $A\in\E$ appears as subquotient of $FI$ for some $I\in\Sm$.

A tripos is called \emph{traditional} if the fibered preorder $\mathscr{P}_F=F^*\Sub_\E$ admits a generic family, i.e.\ there is a 
a mono $t : T \mono F\Sigma$ such that every mono $m : P \mono FI$ fits into a
pullback diagram
\begin{diagram}
  P \SEpbk & \rTo & T \\
  \dEmbed^m & & \dEmbed_t \\
  FI & \rTo_{Fp} & F\Sigma
\end{diagram}
for some (typically not unique) $p : I\to\Sigma$. 
\end{defn}
The fibered preorder $\mathscr{P}_F$ is a first-order hyperdoctrine, i.e.\ a
fibration of Heyting algebras with internal sums and products, since $\Sub_\E$
has and change of base along the finite limit preserving functor $F$ preserves
the required properties (see \cite{Str}). Recalling Pitts's proof of the
comprehension axiom, we now show that $\mathscr{P}_F$ admits an interpretation
higher order (intuitionistic) logic.

\begin{lem}\label{hopitts} For every tripos $F : \Sm\to\E$, the fibered preorder
$\mathscr{P}_F$ satisfies the following \emph{comprehension axiom}.
\begin{itemize}
\item[(CA)]\label{CA} For every object $I$ in $\Sm$ there is are objects
$P(I)$ in $\Sm$ and $\varepsilon_I \in \mathscr{P}_F(I{\times P(I)})$ such that
for all $J$ in $\Sm$ and $\rho \in \mathscr{P}_F(I{\times}J)$,
the formula
\[
\forall j \in J. \exists p \in P(I).\forall i \in I.\, 
   \rho(i,j) \leftrightarrow i \,\varepsilon_I\, p 
\]
holds in the internal logic of $\mathscr{P}_F$.
\end{itemize}
\end{lem}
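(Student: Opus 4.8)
The plan is to build the comprehension datum from the power object of $FI$ in $\E$, using the subquotient hypothesis to pull a suitable $P(I)$ back into the base $\Sm$, and then to reduce the displayed formula to the single assertion that one transpose map in $\E$ is epic. Write $\Omega_\E^{FI}$ for the power object of $FI$, with membership subobject $\in_{FI} \mono FI \times \Omega_\E^{FI}$ classified by $\ev : FI \times \Omega_\E^{FI} \to \Omega_\E$, and recall that subobjects of $FI \times X$ in $\E$ correspond bijectively to maps $X \to \Omega_\E^{FI}$ by transposition, compatibly with reindexing in the parameter $X$.

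First I would apply the defining property of a tripos to the object $\Omega_\E^{FI} \in \E$, obtaining $P(I) \in \Sm$ together with a subobject $m : C \mono F P(I)$ and an epimorphism $e : C \epi \Omega_\E^{FI}$. I then define $\varepsilon_I \in \mathscr{P}_F(I{\times}P(I)) = \Sub_\E(FI \times F P(I))$ to be the subobject obtained by pulling $\in_{FI}$ back along $\id_{FI} \times e$ to a subobject of $FI \times C$ and composing with the mono $\id_{FI} \times m$. The key point is that, although $e$ is defined only on the subobject $C$, the transpose $\hat\varepsilon_I : F P(I) \to \Omega_\E^{FI}$ is total and satisfies $\hat\varepsilon_I \circ m = e$ (since $\in_{FI}$ transposes to the identity and reindexing in the parameter transposes to precomposition); as $e$ is epic, so is $\hat\varepsilon_I$. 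Thus the subquotient hypothesis, which a priori yields only a partial surjection onto $\Omega_\E^{FI}$, is upgraded to a genuine epimorphism out of $F P(I)$.

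It remains to check that the displayed formula evaluates to $\top$ in $\mathscr{P}_F(1) = \Sub_\E(1_\E)$. Unwinding the hyperdoctrine semantics, the biconditional $\rho(i,j) \leftrightarrow i\,\varepsilon_I\,p$, as a subobject of $FI \times FJ \times F P(I)$, is the equalizer of the two classifiers into $\Omega_\E$ obtained by reindexing $\rho$ and $\varepsilon_I$, which factor through $\ev$ via the transposes $\hat\rho : FJ \to \Omega_\E^{FI}$ and $\hat\varepsilon_I$. Applying $\forall_{i \in I}$ and invoking extensionality of $\Omega_\E^{FI}$ (equal members force equal elements), this universal subobject of $FJ \times F P(I)$ becomes precisely the pullback of $\hat\rho$ and $\hat\varepsilon_I$ over $\Omega_\E^{FI}$, namely $\{(j,p) : \hat\rho(j) = \hat\varepsilon_I(p)\}$; the existential $\exists_{p \in P(I)}$ is its image under the projection to $FJ$, and the outer $\forall_{j \in J}$ demands that this image be all of $FJ$. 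Since that projection is the pullback of the epimorphism $\hat\varepsilon_I$ along $\hat\rho$, and epimorphisms are stable under pullback in the topos $\E$, it is itself epic, so the image is $\top_{FJ}$ and the formula holds. The main obstacle is exactly this semantic unwinding: one must verify that $\forall_{i \in I}$ applied to the equalizer of two predicates over $FI$ computes the locus where their transposes into $\Omega_\E^{FI}$ agree — this is where extensionality of the power object is indispensable — after which the reduction to pullback-stability of epimorphisms, hence to the epi-ness of $\hat\varepsilon_I$ secured above, is immediate.
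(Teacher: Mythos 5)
Your proof is correct and follows essentially the same route as the paper's: both apply the subquotient property of the tripos to the power object $\Pow(FI)$ to obtain $P(I)$ with $m : C \mono FP(I)$ and $e : C \epi \Pow(FI)$, define $\varepsilon_I$ by pulling $\in_{FI}$ back along $\id_{FI}\times e$ and composing with $\id_{FI}\times m$, and reduce the validity of the displayed formula to pullback-stability of epimorphisms in $\E$. The only (welcome) difference is organizational: where the paper pulls $e$ back along the transpose $r : FJ \to \Pow(FI)$ of each $\rho$ and verifies a chain of reindexing isomorphisms, you observe once and for all that the transpose $\hat\varepsilon_I : FP(I) \to \Pow(FI)$ satisfies $\hat\varepsilon_I \circ m = e$ and is therefore itself epic, which makes the final semantic step (left as ``it readily follows'' in the paper) fully explicit.
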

\begin{proof}
Let $I$ be an object of $\Sm$. Then since $\mathscr{P}$ is a tripos there is an object
$P(I)$ in $\Sm$ such that $\Pow(FI)$ appears as subquotient of $F(P(I))$, i.e.\ there
is a subobject $m_I : C_I \mono F(P(I))$ such that there exists an epi
$e_I : C_I \epi \Pow(FI)$. Consider

\begin{diagram}
  \varepsilon_I & \lEqual & \varepsilon_I \SEpbk & \rOnto & \in_{FI} \\
  \dEmbed & & \dEmbed & & \dEmbed \\
  FI \times  F(P(I)) & \lEmbed_{FI \times m_I} & FI \times C_I & \rOnto_{FI \times e_I} & FI \times \Pow(FI)
\end{diagram}
giving rise to a subobject $\varepsilon_I$ of $F(I{\times}P(I)) \cong  FI{\times }F(P(I))$.
Since the left square in the above diagram is a pullback we have
$(FI \times m_I)^*{\varepsilon_I} = (FI \times e_I)^*{\in_{FI}}$.

Suppose $\rho : R \mono F(I{\times}J) \cong FI \times FJ$. Then
\begin{diagram}
  R \SEpbk & \rTo & \in_{FI} \\
  \dEmbed^\rho & & \dEmbed \\
  FI{\times}FJ  & \rTo_{FI \times r} & FI{\times}\Pow(FI)
\end{diagram}
for  a unique $r : FJ \to \Pow(FI)$. Consider the pullback
\begin{diagram}
  C \SEpbk & \rTo^{\widetilde{r}} & C_I \\
  \dOnto^e & & \dOnto_{e_I} \\
  FJ & \rTo_r & \Pow(FI)
\end{diagram}
where $e$ is epic since in a topos epis are stable under arbitrary pullbacks.
Thus, we have
\begin{tabbing}
  \qquad \= $(FI \times e)^*\rho$ \= $= (FI \times e)^*(FI \times r)^*{\in_{FI}}
    \;\cong (FI \times er)^*{\in_{FI}} =$ \\
    \>\> $= (FI \times e_I\widetilde{r})^*{\in_{FI}}
    \cong (FI \times \widetilde{r})^*(FI \times e_I)^*{\in_{FI}} =$ \\
    \>\> $= (FI \times \widetilde{r})^*(FI \times m_I)^*{\varepsilon_I} \cong$ \\
    \> $\cong (FI \times m_I\widetilde{r})^*\varepsilon_I$
\end{tabbing}
from which it readily follows that
\[ \forall j \in J. \exists p \in P(I).\forall i \in I.\, 
   \rho(i,j) \leftrightarrow i \,\varepsilon_I\, p \]
holds in the internal logic of $\mathscr{P}_F$.
\end{proof}

From Lemma~\ref{hopitts} and the results of \cite{Pit02} it follows that fibered preorders
of the form $F^*\Sub_\E$ for some tripos $F : \Sm\to\E$ may up to equivalence be
characterized as Heyting algebras $\mathscr{P}$ fibered over $\Sm$ with internal sums
$\exists$ and internal products $\forall$ satisfying the comprehension axiom~(CA).


A tripos $F : \Sm\to\E$ is traditional iff 
$\mathscr{P}_F$ is a tripos in the sense of \cite{HJP}, i.e.\ there exists a $
T \in \mathscr{P}(\Sigma)$ from which all $P \in \mathscr{P}(I)$ can be obtained
by reindexing along some map $p : I\to\Sigma$. 

If a tripos $F$ has a right adjoint (and thus is the inverse image part of a localic geometric morphism) then it is always traditional since we can set $\Sigma=U\Omega_\E$, and in this case the maps $p$ are unique.

Finally, we note that $F$ 
is the inverse image part of a localic geometric morphism iff $F^*\Sub_\E$ is locally small iff $P_F$ is locally small (see \cite{Str}).

\section{Constant objects functors are not unique}

For arbitrary base toposes $\Sm$ triposes $F,G : \Sm\to\E$ need not be equivalent
since if $\Sm$ is $\Sh(X)$ and $\E$ is $\Sh(Y)$ for some sober spaces $X$ and $Y$
then there are at least as many triposes $\Sm\to\E$ (up to equivalence) as there
are continuous maps from $Y$ to $X$. But even if $\Sm$ is $\Set$ there are in general
many non-equivalent triposes over $\Set$ giving rise to the same topos as shown by the
following simple counterexample

\begin{thm}
For every natural number $n>0$ the functor $F_n : \Set\to\Set : I \mapsto I^n$ is a tripos.
The triposes $F_n^*\Sub_\Set$ and $F_m^*\Sub_\Set$ are equivalent if and only if $n=m$.
\end{thm}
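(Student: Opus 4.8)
The ``if'' direction is trivial, so the plan is to show that each $F_n$ is a tripos and that a fibered equivalence $\mathscr{P}_{F_n}\simeq\mathscr{P}_{F_m}$ forces $n=m$. First I would verify that $F_n=(-)^n$ is a tripos. Finite limit preservation is routine: since limits commute with limits and $(-)^n$ is a finite product, $(-)^n$ preserves all limits; concretely $1^n\cong 1$, $(I\times J)^n\cong I^n\times J^n$, and a tuple lies in the equalizer of $f^n,g^n$ iff each of its coordinates lies in the equalizer of $f,g$. For the subquotient condition I must exhibit every set $A$ as a subquotient of some $F_nI=I^n$. For $A=\emptyset$ this holds trivially as $F_n\emptyset=\emptyset$; for $A\neq\emptyset$ I would take $I=A$ and use the first projection $\pi\colon A^n\epi A$, which is surjective because the remaining coordinates can be filled with any chosen element of the nonempty $A$, so that $A$ is even a quotient of $F_nA$, hence a subquotient.

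For the ``only if'' direction, the key observation is that the fiber of $\mathscr{P}_{F_n}=F_n^*\Sub_\Set$ over $I$ is the powerset $\Pow(I^n)$, with reindexing along $f\colon I\to J$ given by preimage under $f^n$. Equivalence of triposes over $\Set$ is equivalence of fibrations over $\Set$, which is fiberwise; restricted to the fiber over $I$ it yields an equivalence of the preorders $\Pow(I^n)$ and $\Pow(I^m)$, and since these are posets such an equivalence is in fact an order isomorphism $\Pow(I^n)\cong\Pow(I^m)$.

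The conclusion then follows from a cardinality count: an order isomorphism of powersets preserves bottom and atoms, hence restricts to a bijection between the singletons, so $\Pow(X)\cong\Pow(Y)$ forces $|X|=|Y|$. Choosing $I$ with $|I|=2$ gives $2^n=|I^n|=|I^m|=2^m$, whence $n=m$. I expect the only genuine subtlety to be conceptual rather than computational, namely recognizing that ``equivalent as triposes'' means fiberwise equivalence of the fibered preorders; once this is in hand a single base object of size at least two already settles the matter, and no genuinely fibered reasoning is needed.
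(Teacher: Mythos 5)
Your proposal is correct and follows essentially the same route as the paper: verify finite limit preservation of $(-)^n$, exhibit every set as a subquotient of $F_nI$ (the paper uses the diagonal as a split mono where you use a projection as a quotient, an immaterial difference), and then reduce the ``only if'' direction to comparing the fibers over $2$, namely $\Pow(2^n)\simeq\Pow(2^m)$ iff $2^n=2^m$ iff $n=m$. No discrepancies worth noting.
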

\begin{proof}
Obviously, the $F_n$ preserve finite limits since they are right adjoints and every
$I\in\Set$ appears as split subobject of $F_n(I)$. Thus, all $F_n$ are triposes but
$F_n^*\Sub_\Set$ and $F_m^*\Sub_\Set$ are equivalent as triposes if and only if $n=m$
since the latter is equivalent to $2^n = 2^m$ which in turn is equivalent to
$F_n^*\Sub_\Set(2) \simeq F_m^*\Sub_\Set(2)$.
\end{proof}

Notice, however, that $F_n$ is a traditional tripos if and only if $n=1$. Thus, it may
still be the case that there exist traditional triposes $F,G : \Sm \to \E$ which are not
equivalent as triposes. Unfortunately, we have not been able so far to find examples of
non-equivalent \emph{traditional} triposes $\mathscr{P}_1$ and $\mathscr{P}_2$ over $\Set$
such that the ensuing toposes $\Set[\mathscr{P}_1]$ and  $\Set[\mathscr{P}_2]$ are
equivalent (a question which for the special case of localic toposes has been raised already in~\cite[p.~228]{HJP}). However, though a bit annoying, we can't find this as a major problem
since our weak notion of tripos is conceptually more adequate than the traditional one
because from a logical point of view adding the comprehension axiom to
first-order posetal hyperdoctrines appears much more natural than requiring that
they are witnessed by Skolem functions in the base $\Sm$, i.e.\ requiring for
all $\rho \in \mathscr{P}(I{\times}J)$ the existence of a function $r : J \to
P(I)$ such that
\[ \forall j \in J.\forall i \in I.\, \rho(i,j) \leftrightarrow i \,\varepsilon_I\, r(j) \]
holds in the logic of $\mathscr{P}$. At the end of \cite{Pit02} the author expresses
a similar view in a slightly more cautious way.

Finally, we observe that triposes over $\Set$ may give rise to non-localic
Gro\-then\-dieck toposes. Let $\E$ be the topos of reflexive graphs, i.e.\
presheaves over the 3 element monoid $\Delta([1],[1])$ of monotone endomaps of
the ordinal $2$. As observed by Lawvere the global elements functor $\Gamma : \E
\to\Set$ fits into a sequence of adjoints $\Pi \dashv \Delta \dashv \Gamma
\dashv \nabla : \Set \inc \E$. The rightmost functor $\nabla$ preserves all
limits since it has a left adjoint. Subobjects of objects of the form
$\nabla(I)$ are up to isomorphism precisely those reflexive graphs where between
two nodes there is at most one edge (i.e.\ directed graphs as traditionally
considered in combinatorics!). But since any reflexive graph can be covered by
such a traditional directed graph every object of $\E$ appears as subquotient of
some $\nabla(I)$ for which reason $\nabla$ is a tripos over $\Set$ though it is
not the inverse image part of a geometric morphism.

\section{Regular triposes}

It is well known that a morphism $e : Y\to X$ in an elementary topos $\E$ is epic iff
the pullback functor $e^* : \Sub_\E(X) \to \Sub_\E(Y)$ reflects maximal subobjects, i.e.\
a mono $m : P \mono X$ in $\E$ is an iso already if $e^*m$ is an iso. Recall that a preorder
fibered over a regular category is a \emph{prestack} (w.r.t.\ the regular cover topology)
iff for all regular epis $e$ reindexing along it (preserves and) reflects the order.
Thus, for a tripos $F : \Sm\to\E$ the fibered preorder $F^*\Sub_\E$ is a prestack iff $F$
preserves (regular) epis. 

This observation strongly suggests to require that triposes $F : \Sm\to\E$ also preserve 
epis since it vacuously holds when $\Sm$ is $\Set$ (since in $\Set$ all epis are split 
as ensured by the axiom of choice!) and, moreover, by
Lemma~6.1 (``Pitts's Iteration Theorem'') of \cite{Pit81} 
triposes preserving epis are closed under composition.

\begin{defn}
A tripos $F : \Sm\to\E$ is called \emph{regular} iff $F$ preserves epis.
\end{defn}

Recall that a functor between regular categories is called \emph{regular} iff it
preserves finite limits and regular epis. Thus, regular triposes are regular functors
$F : \Sm \to\E$ between toposes such that every $A\in\E$ appears as subquotient of
$FI$ for some $I\in\Sm$. The usual proof (as in \cite{Joh}) that localic geometric 
morphisms are closed under composition extends straightforwardly to an argument
showing that regular triposes are closed under composition 

From Prop.~3.14 of \cite{Pit81} it follows that a traditional tripos  
$F : \Sm\to\E$ is regular iff it has ``fibrewise quantification'', i.e.\ 
there are maps $\bigvee,\bigwedge : \Omega_\Sm^\Sigma \to\Sigma$ such that
$\exists_{Fu}(Fp)^*t$ and $\forall_{Fu}(Fp)^*t$ appear as pullbacks
of $t : T\mono F\Sigma$ along $F(\lambda i{:}I.\bigvee \{ p(j) \mid u(j) = i\})$ 
and $F(\lambda i{:}I.\bigwedge \{ p(j) \mid u(j) = i\})$, respectively,
for all $u:J \to I$ and $p : J\to\Sigma$.

\begin{thm}\label{iteration1}
Let $F_1 : \Sm\to\E_1$ and $F_2 : \Sm\to\E_2$ be triposes and $H : F_1 \to F_2$,
i.e.\ $H : \E_1\to\E_2$ with $F_2 = HF_1$. Then $H$ is a tripos iff $H$ preserves finite
limits and $H$ is a regular tripos iff $H$ is a regular functor.
\end{thm}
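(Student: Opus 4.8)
The plan is to notice that, for the composite $F_2 = H F_1$, the subquotient-covering condition demanded of a tripos is inherited by $H$ for free from $F_2$, so that the entire content of ``$H$ is a tripos'' reduces to finite-limit preservation; the second biconditional then follows by unwinding what ``regular'' means on each side. Both forward implications are immediate from the definitions. If $H$ is a tripos then it preserves finite limits by Definition~\ref{tripdef}. If $H$ is a regular tripos then it is a tripos—hence preserves finite limits—and additionally preserves epis; since $\E_1$ and $\E_2$ are toposes, every epi is a regular epi, so $H$ preserves regular epis and is therefore a regular functor.

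For the backward direction of the first biconditional, suppose $H$ preserves finite limits; since $\E_1$ is a topos (being the codomain of the tripos $F_1$), it remains only to verify the covering condition for $H$ viewed as a functor over the base $\E_1$. Let $A\in\E_2$ be arbitrary. Because $F_2$ is a tripos, $A$ appears as a subquotient of $F_2 I$ for some $I\in\Sm$: there is an object $C$ with an epi $e : C \epi A$ and a mono $m : C \mono F_2 I$. But $F_2 I = H F_1 I$, so the very same $e$ and $m$ exhibit $A$ as a subquotient of $H(F_1 I)$, and here $F_1 I\in\E_1$. Hence every object of $\E_2$ is a subquotient of $HB$ for some $B\in\E_1$, so $H$ is a tripos.

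For the second biconditional, recall that a regular functor between regular categories is one preserving finite limits and regular epis, and that in a topos regular epis coincide with epis; thus for $H:\E_1\to\E_2$ being a regular functor amounts exactly to preserving finite limits and epis. Combining this with the first biconditional, finite-limit preservation already makes $H$ a tripos, so $H$ is a regular functor if and only if it is a tripos that moreover preserves epis, which is precisely the definition of a regular tripos. The only point requiring care—rather than a genuine obstacle—is the inheritance of covering through the equation $F_2 = H F_1$: since $F_2 I$ and $H(F_1 I)$ are literally the same object, no transitivity of the subquotient relation is invoked, and the witnessing objects $F_1 I$ automatically lie in the correct base $\E_1$.
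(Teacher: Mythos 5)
Your proposal is correct and follows essentially the same route as the paper: the covering condition for $H$ is inherited verbatim from $F_2$ via the identity $F_2I = H(F_1I)$, and the regular case reduces to the coincidence of epis and regular epis in a topos. The paper's proof is just a terser version of the same argument.
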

\begin{proof}
The forward directions are trivial. For the backwards directions suppose $A\in\E_2$.
Then, since $F_2$ is a tripos there exists a subobject
$m : C \mono F_2I$ and an epi $e : C \epi A$. Since $F_2 = H F_1$ we have
$m : C \mono H(F_1I)$ and $e : C \epi A$. Thus, we have shown that $H$ validates
the second condition required for a tripos.
\end{proof}

The previous theorem for regular triposes $F_1 : \Sm\to\E_1$ and $F_2 : \Sm\to\E_2$
suggests that the right notion of morphism from $F_1$ to $F_2$ is a functor
$H : F_1 \to F_2$ such that $H : \E_1 \to \E_2$ is regular since for this definition
morphisms to a regular tripos $F : \Sm\to\E$ coincide with regular triposes over $\E$.\footnote{Moreover, for a not necessarily regular tripos $F : \Sm \to \E$ a regular functor
$H$ from $\E$ to a topos $\F$ is a regular tripos whenever $H \circ F : \Sm\to\F$
is a regular tripos.}

In the subsequent Theorem~\ref{iteration2} we will show that morphisms between
traditional regular triposes are precisely the traditional regular triposes.
But for this purpose we need the following lemma characterizing traditional regular
triposes among regular triposes in terms of a condition which at first sight looks
weaker than the one given in Def.~\ref{tripdef}.

\begin{lem}\label{FreyLem}
Let $F : \Sm\to\E$ be a regular tripos and $t : T \mono F\Sigma$ be weakly generic
for $F^*\Sub_\E$, i.e.\ every mono $m : P \mono FI$ fits into a diagram
\begin{diagram}
P & \lTo & \bullet & \rTo & T \\
\dEmbed^m & & \dEmbed & & \dEmbed_t \\
FI & \lTo_{Fe} & FJ & \rTo_{Fp} & F\Sigma \\
\end{diagram}
where both squares are pullbacks and $e : J \to I$ is epic, then $F$
is a traditional tripos.
\end{lem}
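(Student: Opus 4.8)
The plan is to manufacture a genuine generic family over the power object $\Sigma' = \Pow\Sigma$ of $\Sigma$ in $\Sm$, thereby upgrading the weakly generic $t$ to a strictly generic one. Concretely, I would take the membership mono $\in_\Sigma \mono \Sigma\times\Pow\Sigma$ of $\Sm$, pass to its image $F(\in_\Sigma)$ (a mono of $\E$ since $F$ preserves finite limits) viewed as an element of $\mathscr{P}_F(\Sigma\times\Pow\Sigma)=\Sub_\E(F(\Sigma\times\Pow\Sigma))$, and set
\[ t' \;=\; \exists_{\pi}\bigl(\,\in_\Sigma\,\wedge\,\rho^* t\,\bigr)\ \in\ \mathscr{P}_F(\Pow\Sigma), \]
where $\pi : \Sigma\times\Pow\Sigma \to \Pow\Sigma$ and $\rho : \Sigma\times\Pow\Sigma\to\Sigma$ are the projections; in the internal logic this is the predicate $t'(X) \leftrightarrow \exists s\in X.\,t(s)$. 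I claim the associated mono $t' : T'\mono F(\Pow\Sigma)$ is generic in the sense of Definition~\ref{tripdef}, which is the assertion of the lemma.

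Given any mono $m : P\mono FI$, take the weakly generic data $e : J\to I$ (epic) and $p : J\to\Sigma$ supplied by the hypothesis; since both squares there are pullbacks, $e^* m = p^* t$ holds in $\mathscr{P}_F$. Let $\bar p : I\to\Pow\Sigma$ be the name of the image of $\langle e,p\rangle : J\to I\times\Sigma$, so that $\bar p(i)=\{\,p(j)\mid e(j)=i\,\}$. The whole task is then to establish $\bar p^* t' = m$, for then $m\cong (F\bar p)^* t'$ and $t'$ is generic.

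I would verify this through two identities. First, a Beck--Chevalley computation along the pullback of $\pi$ against $\bar p$ (legitimate since every tripos satisfies Beck--Chevalley) rewrites $\bar p^* t'$ as $\exists_e\, p^* t$: the pullback is $\Sigma\times I$, reindexing $\in_\Sigma$ yields the predicate $s\in\bar p(i)$, and quantifying gives exactly the internal equivalence $(\exists s\in\bar p(i).\,t(s))\leftrightarrow(\exists j.\,e(j)=i\wedge t(p(j)))$, which is forced by the definition of $\bar p(i)$ as the direct image of the fibre of $e$. Second, I would invoke regularity: since $F$ preserves epis, $Fe$ is epic, so reindexing along it reflects the order on $\mathscr{P}_F$ (the prestack property recalled at the beginning of the present section). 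Order-reflection together with the counit of $\exists_e\dashv e^*$ then forces $\exists_e e^* m = m$, so that
\[ \bar p^* t' \;=\; \exists_e\, p^* t \;=\; \exists_e\, e^* m \;=\; m. \]

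The routine bookkeeping — preservation of monos by $F$, commutation of the comparison $\Sub_\Sm\to\mathscr{P}_F$ with reindexing, the explicit pullback square for Beck--Chevalley, the triangle identities — is unproblematic. The crux, and the one place where regularity is genuinely used, is the descent step $\exists_e e^* m = m$, which converts the weakly generic diagram built over the cover $e$ into a single classifying map $\bar p$ defined on $I$ itself. The conceptual key is recognizing that the right generic object is $\Pow\Sigma$ carrying the ``there is a witness in the subset'' predicate $t'$, so that the non-uniqueness of the cover $e$ is absorbed by passing from elements of $\Sigma$ to subsets of $\Sigma$.
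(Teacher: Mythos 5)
Your construction is essentially the paper's own proof: the generic predicate $t'(X)\leftrightarrow\exists s\in X.\,t(s)$ on $\Pow\Sigma$ is exactly the paper's $\exists_q p^*t$ built over the membership relation $E\mono\Sigma\times P(\Sigma)$, your classifying map $\bar p$ is the paper's $g(i)=\{f(j)\mid e(j)=i\}$, and the two identities you isolate ($\bar p^*t'\cong\exists_e p^*t$ via Beck--Chevalley, and $\exists_e e^*m\cong m$ via the prestack property) are precisely the two steps of the paper's computation. One minor caveat: regularity of $F$ is used not only in the descent step $\exists_e e^*m\cong m$ but also, implicitly, in your ``forced by the definition of $\bar p(i)$'' step --- the converse direction of that internal equivalence needs the onto comparison map from $J$ to the pullback of the membership relation along $\bar p$ (the paper's $h$) to be sent by $F$ to an epi, so that $\exists_h h^*\cong\mathrm{id}$ holds in $\mathscr{P}_F$.
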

\begin{proof}
Suppose $t : T \mono F\Sigma$ is a weakly generic family for $F^*\Sub_\E$.
Let $E = \{ (u,U) \in \Sigma \times P(\Sigma) \mid u \in U \}$ and
$p : E \to \Sigma$ and $q : E \to P(\Sigma)$ the respective projection maps. 
We will show that $\exists_q p^* t$ is a generic family for $F^*\Sub_\E$.

For this purpose suppose $m \in \Sub_\E(FI)$. By assumption there are
$e : J \epi I$ and $f : J \to \Sigma$ such that $e^*m \cong f^*t$. Since
$F^*\Sub_\E$ is a prestack w.r.t.\ the regular cover topology we have
$m \cong \exists_ee^*m \cong \exists_ef^*t$. Let $g : I \to P(\Sigma)$
with $g(i) = \{ f(j) \mid e(j) = i\}$. Obviously, the map $\pair{f}{ge}$
factors through $\pair{p}{q}$ since $f(j) \in g(e(j))$. 
Consider the following diagram
\begin{diagram}
& & J & & \\
&  \ldTo(2,4)^f & \dOnto_h & \rdOnto^e & \\
& & R \SEpbk & \rTo_{r_2} & I \\
& & \dTo_{r_1} & & \dTo_g \\
\Sigma & \lTo_p & E & \rTo_q & P(\Sigma)
\end{diagram}
where 
$R = \{ (u,U,i) \in \Sigma \times P(\Sigma) \times I \mid u \in U = g(i) \}$
with $r_1$ and $r_2$ the respective projections and $h(j) = (f(j),g(e(j)),e(j))$.
Notice that $h$ is onto since $g(i) = \{ f(j) \mid e(j) = i \}$. We have
\begin{tabbing}
  \qquad \= $g^*\exists_qp^*t$ \= $\cong$ \= $\exists_{r_2}r_1^*p^*t$
  \qquad\qquad \= by Beck-Chevalley condition\\
  \> \> $\cong$ \> $\exists_{r_2}\exists_hh^*r_1^*p^*t$ \> since $h$ is epic \\
  \> \> $\cong$ \> $\exists_ef^*t$ \> since $e = r_2h$ and $f = pr_1h$ \\
  \>\> $\cong$ \> $m$
\end{tabbing}
as desired.
\end{proof}

\begin{thm}\label{iteration2}
Let $F_1 : \Sm \to \E_1$ be a traditional regular tripos and $H : \E_1 \to \E_2$
a regular functor between toposes. Then $H$ is a traditional regular tripos if and
only if $F_2 = H F_1$ is a traditional regular tripos.
\end{thm}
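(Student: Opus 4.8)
The plan is to prove the two implications separately, disposing of the word ``regular'' uniformly and then establishing ``traditional'' by two rather different routes. For the regular clause: in the forward direction $H$ is a regular tripos by hypothesis, so $F_2=HF_1$ is a regular tripos by Pitts's Iteration Theorem~\cite{Pit81}; in the converse direction $F_1$ and $F_2$ are triposes and $H$ is a regular functor (a standing hypothesis), so $H$ is a regular tripos by Theorem~\ref{iteration1}. In each case it remains only to establish the word \emph{traditional}.

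For the implication ``$F_2$ traditional $\Rightarrow$ $H$ traditional'' I would invoke Lemma~\ref{FreyLem} applied to $H$ regarded as a regular tripos over the base $\E_1$: it suffices to exhibit a weakly generic family for $H^{*}\Sub_{\E_2}$. Writing $t_2:T_2\mono F_2\Sigma_2=H(F_1\Sigma_2)$ for a generic family of $F_2$ with $\Sigma_2\in\Sm$, I claim that $t_2$, now read as a mono into $H(F_1\Sigma_2)$ with $F_1\Sigma_2\in\E_1$, is weakly generic for $H$. Given a mono $n:Q\mono HX$ with $X\in\E_1$, use that $F_1$ is a tripos to choose $a:A\mono F_1L$ and an epi $b:A\epi X$; pulling $n$ back along the epi $Hb$ yields $n':Q'\mono HA\mono HF_1L=F_2L$, which the generic family of $F_2$ classifies as $n'\cong(F_2r)^{*}t_2$ for some $r:L\to\Sigma_2$. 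A short diagram chase using that $Ha$ is monic shows that the two squares with bottom row $HX\xleftarrow{Hb}HA\xrightarrow{H(F_1r\circ a)}H(F_1\Sigma_2)$ are pullbacks, and here $b$ is a genuine epi in $\E_1$; this is exactly the data Lemma~\ref{FreyLem} requires, so $H$ is traditional.

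For the converse ``$H$ traditional $\Rightarrow$ $F_2$ traditional'' I would instead build a genuine generic family for $F_2$ directly, since now no epi onto the given index object need exist in $\Sm$. Fix generic families $t_H:T_H\mono H\Sigma_H$ of $H$ (with $\Sigma_H\in\E_1$) and $t_1:T_1\mono F_1\Sigma_1$ of $F_1$, and, using that $F_1$ is a tripos, a subquotient presentation $c:C\mono F_1K$, $d:C\epi\Sigma_H$. Put $\Sigma_2:=\Sigma_1^{K}\in\Sm$. The idea is that a map $I\to\Sigma_1^{K}$ encodes, via $t_1$, a relation between $F_1I$ and $C\subseteq F_1K$, hence (after pushing along $d$) a relation between $F_1I$ and $\Sigma_H$; the universal such relation $\Theta\mono F_1\Sigma_2\times\Sigma_H$ is obtained by reindexing $t_1$ along $F_1\ev$ (with $\ev:\Sigma_1^{K}\times K\to\Sigma_1$), restricting along $\id\times c$, and applying $\exists_{\id\times d}$. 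I then set $t:=\exists_{\pi_1}\bigl(H\Theta\wedge\pi_2^{*}T_H\bigr)\mono HF_1\Sigma_2=F_2\Sigma_2$, where $\pi_1,\pi_2$ are the projections of $HF_1\Sigma_2\times H\Sigma_H$. To verify genericity, take $m:P\mono F_2I$; by genericity of $t_H$ we have $m\cong(Hq)^{*}t_H$ for some $q:F_1I\to\Sigma_H$, whose graph $\Gamma_q$ pulls back along $\id\times d$ to a subobject of $F_1I\times C$ that, by genericity of $t_1$, is $(F_1w)^{*}t_1$ for some $w:I\times K\to\Sigma_1$; its transpose $\hat w:I\to\Sigma_2$ is the desired classifying map, and a Beck--Chevalley computation yields $(F_2\hat w)^{*}t\cong m$.

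The main obstacle lies precisely in this last construction: the universal $\Theta$ is genuinely only a \emph{relation}, not the graph of a map $F_1\Sigma_2\to\Sigma_H$, so one cannot simply reindex $t_H$ along it. The device $\exists_{\pi_1}\bigl(H(-)\wedge\pi_2^{*}T_H\bigr)$ is what repairs this: it turns an arbitrary relation into a subobject of $F_2\Sigma_2$, and because reindexing along $F_2\hat w$ transports $\Theta$ to the genuine functional graph $H\Gamma_q$ of $Hq$, the formula collapses to $(Hq)^{*}t_H\cong m$. The remaining effort is the routine but somewhat lengthy Beck--Chevalley and stability‑of‑epis bookkeeping justifying each reindexing step, which is also where regularity of $H$ is used, since it guarantees that $\id\times d$ and its $H$‑image remain epic.
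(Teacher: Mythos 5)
Your proposal is correct, and for the direction the paper actually proves in detail it coincides with the paper's argument: to show that $F_2$ traditional implies $H$ traditional, both you and the paper first get that $H$ is a regular tripos from Theorem~\ref{iteration1}, then present a subobject $n:Q\mono HX$ by pulling back along $Hb$ for a subquotient presentation $X \twoheadleftarrow A\mono F_1L$ in $\E_1$, classify the resulting subobject of $F_2L$ by the generic family of $F_2$, and conclude via Lemma~\ref{FreyLem} that the generic family of $F_2$, read as a mono into $H(F_1\Sigma_2)$, is weakly generic for $H$. Where you genuinely diverge is the other direction: the paper simply cites Pitts's Iteration Theorem, whereas you reprove it, constructing an explicit generic family $t=\exists_{\pi_1}\bigl(H\Theta\wedge\pi_2^*t_H\bigr)$ over $\Sigma_2=\Sigma_1^K$ from a subquotient presentation $C\mono F_1K$, $C\epi\Sigma_H$ of the generic index object of $H$. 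I checked the verification: the Beck--Chevalley step for $\exists_{\pi_1}$, the identity $(F_1\hat w\times\id)^*\Theta\cong\exists_{\id\times d}(\id\times d)^*\Gamma_q\cong\Gamma_q$ (using that $\id\times d$ is epic), and the final collapse $\exists_{\pi_1}(H\Gamma_q\wedge\pi_2^*t_H)\cong(Hq)^*t_H\cong m$ all go through, with regularity of $H$ (preservation of images) entering exactly where you say it does. The trade-off is the expected one: the citation keeps the proof short, while your construction makes the argument self-contained and exposes the mechanism of the iteration theorem -- in particular the trick of replacing the non-functional relation $\Theta$ by $\exists_{\pi_1}(H(-)\wedge\pi_2^*t_H)$ -- at the cost of a page of bookkeeping the paper deliberately avoids.
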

\begin{proof}
The forward direction is Pitts's Iteration Theorem.

For the backward direction suppose that $H : \E_1 \to \E_2$ is a regular functor
such that $F_2 = H F_1$ is a traditional regular tripos. By Theorem~\ref{iteration1}
it is immediate that $H$ is a regular tripos, too. Since $F_2$ has been assumed
to be a traditional tripos there is a $t : T \mono F_2\Sigma$ generic for $F_2^*\Sub_{\E_2}$.
For showing that $H$ is a traditional tripos it suffices by Lemma~\ref{FreyLem} to show
that $t : T \mono H F_1 \Sigma$ is weakly generic for $H^*\Sub_{\E_2}$.

Suppose $m : P \mono HA$ for some $A \in \E_1$. Since $F_1$ is a traditional tripos
there exist $n : Q \mono F_1I$ and $e : Q \epi A$ for some $I  \in \Sm$. Since
$F_2 = H F_1$ is a traditional tripos there exists $p : I \to \Sigma$ such that
$Hn \circ (He)^*m$ arises as pullback of $t$ along $H F_1 p$ for some $p : I \to \Sigma$.
Thus we have
\begin{diagram}
  P & \lOnto & \bullet & \rTo &  T \\
  \dEmbed^m & & \dEmbed_{(He)^*m} & & \\
  HA & \lOnto_{He} & HQ & & \dEmbed_{t} \\
  & & \dEmbed_{Hn} & & \\
  & & HF_1I & \rTo_{HF_1 p} & H F_1 \Sigma
\end{diagram}
from which it follows that $(He)^*m$ arises as pullback of $t$ along
$H(F_1p \circ n) = HF_1p \circ Hn$. Thus, we have
\begin{diagram}
  P & \lOnto & \bullet & \rTo &  T \\
  \dEmbed^m & & \dEmbed_{(He)^*m} & & \dEmbed_{t} \\
  HA & \lOnto_{He} & HQ & \rTo_{H(F_1 p \circ n)} &  H F_1 \Sigma \\
\end{diagram}
where both squares are pullbacks as required. 
\end{proof}

We conclude this section with some observations on the

\subsection{Preservation of assemblies by tripos morphisms}\label{assembpres}

Following \cite{realizbook} for a tripos $F : \Sm\to\E$ one may define
\emph{assemblies} as those objects of $\E$ which appear as subobjects of some
$FI$. If $G : \Sm\to\F$ is a tripos and $H : F \to G$ such that {$H :
\E\to\F$} preserves finite limits then $H$ preserves assemblies, i.e.\ sends
assemblies w.r.t.\ $F$ to assemblies w.r.t.\ $G$, since $Hm : HP \mono HFI = GI$
whenever {$m : P \mono FI$}. It follows from the definition of tripos that
every object $A$ of $\E$ appears as subquotient of some $FI$, i.e.\ we have
$\begin{diagram} A & \lOnto^e & C & \rEmbed^m & FI \end{diagram}$. If {$H :
F \to G$} is a regular functor between triposes then $\begin{diagram} HA &
\lOnto^{He} & HC & \rEmbed^{Hm} & HFI = GI \end{diagram}$, i.e.\ $H$ preserves
coverings of objects by assemblies in a very strong sense.

\section{Relation to Miquel's implicative algebras}

In \cite{Miq18} A.~Miquel has shown that traditional triposes over $\Set$
correspond to so called \emph{implicative algebras}~\cite{Miq20}.

\begin{defn}
An \emph{implicative structure} is a complete lattice $\A$ together with an
operation $\to\; : \A^\opp \times \A \to \A$ such that $x \to \bigwedge Y =
\bigwedge\limits_{y \in Y} (x \to y)$ for all $x \in \A$ and $Y \subseteq \A$.
Then $K_\A = \bigwedge\limits_{x,y \in\A} x{\to}y{\to}x$ and $S_\A =
\bigwedge\limits_{x,y,z \in\A} (x{\to}y{\to}z){\to}(x{\to}y){\to}x{\to}z$ are
elements of $\A$.

A \emph{separator} in an implicative structure $(\A,\to)$ is an upward closed
subset $\Sm$ of $\A$ such that $K_\A,S_\A \in \Sm$ and $\Sm$ is closed under
\emph{modus ponens}, i.e.\ $b \in \Sm$ 
whenever $a \in \Sm$ and $a \to b \in \Sm$.

An \emph{implicative algebra} is a triple $(\A,\to,\Sm)$ such that 
$(\A,\to)$ is an implicative structure and $\Sm$ is a separator in $(\A,\to)$.
\end{defn}

With every implicative algebra $\A$ one associates a $\Set$-based tripos 
$\mathscr{P}^\A$ where $\mathscr{P}^\A(I)$ is the preorder $\vdash_I$ on $\A^I$
defined as
\[ \varphi \vdash_I \psi  \quad \mbox{iff} \quad
   \bigwedge\limits_{i\in I} \bigl(\varphi_i \to \psi_i\bigr) \in \Sm \]
and reindexing is given by precomposition.

In \cite{Miq18} A.~Miquel has shown
that every traditional regular tripos over $\Set$ is equivalent to
$\mathscr{P}^\A$ for some implicative algebra $\A$.

For $i{=}1,2$ let $F_i : \Set \to \E_i$ be the constant objects functor for the
regular tripos induced by an implicative algebra $\A_i$ in $\Set$, i.e.\ $\E_i =
\Set[\mathscr{P}^{\A_i}]$. Due to the remark in Subsection~\ref{assembpres}
regular functors $G : \E_1 \to \E_2$ with $F_2 = G F_1$ correspond to cartesian
functors $g : F_1^*\Sub_{\E_1} \to F_2^*\Sub_{\E_2}$ preserving regular logic,
i.e.\ finite limits and existential quantification. Obviously, such $g$ are
uniquely determined by $h = g_{\A_1}(\id_{\A_1}): \A_1 \to \A_2$ since
$g_I(\varphi : I \to \A_1) = h \circ \varphi$. This suggests to define a
morphism of implicative algebras from $\A_1$ to $\A_2$ as a function $h : \A_1
\to \A_2$ such that the cartesian functor $g : F_1^*\Sub_{\E_1} \to
F_2^*\Sub_{\E_2}$ given by $g_I(\varphi : I \to \A_1) = h \circ \varphi$
preserves regular logic, i.e.\ finite limits and existential quantification.

Unfortunately, Miquel's result from \cite{Miq18} does not extend to 
arbitrary base toposes. The reason is that for a traditional regular 
tripos $F : \Sm \to \E$ there need not exist a subobject $S$ of $\Sigma$
such that
\begin{enumerate}
\item[(1)] its characteristic map $\chi_S : \Sigma\to\Omega_\Sm$ induces
  by postcomposition a cartesian functor $\gamma_S : F^*\Sub_\E \to \Sub_\Sm$
  preserving finite meets in each fiber and
\item[(2)] $u : 1 \to \Sigma$ factors through $S$ iff $(Fu)^*t$ is 
  isomorphic to $\id_{F1}$.
\end{enumerate}
Notice that the first condition means that $u^*S \leq v^*S$ whenever $F(u)^*t
\leq F(v)^*t$ and that $t \in S$ and $\forall u,v:\Sigma.\,u \wedge v \in S
\leftrightarrow (u \in S \wedge v \in S)$ hold in the internal logic of $\Sm$.
For base toposes $\Sm$ which are not well-pointed such $S$ need neither exist
nor be unique (for the latter see Example~4.12.12 of \cite{FreyThes} for a
counterexample\footnote{Take for $\Sm$ the Sierpi\'nski topos
$\Set^{\mathbbm{2}^\opp}$ and for $F$ the functor $\Id_\Sm$. Then there are two
possible choices for $S$, namely $\top : 1 \to \Omega_\Sm$ and the subobject $S$
of $\Omega_\Sm$ with $S_0 = \Omega_0$ and $S_1 = \{\top\}$. In the first case
the corresponding $\gamma_S$ is $\id_{\Omega_\Sm}$ and in the second case it
sends a subobject $P$ of $A$ in $\Sm$ to the subobject $\gamma_S(P)$ of $A$ with
$\gamma_S(P)_0 = A_0$ and $\gamma_S(P)_1 = P_1$.}).

The related stronger condition that $u : I\to\Sigma$ factors through $S$ iff
$F(u)^*t$ is isomorphic to $\id_{FI}$ is known as ``definability of truth'',
i.e.\ that the full subfibration of $F^*\Sub_\E$ on true predicates is definable
in the sense of Bénabou (see Section 12 of \cite{Str}). This stronger condition,
however, amounts to the requirement that the fibration $F^*\Sub_\E$ is locally
small, i.e.\ equivalent to the externalization of a complete Heyting algebra
internal to $\Sm$, which in turn is equivalent to the requirement that $F$ is
the inverse image part of a localic geometric morphism.

\section{Summary and Conclusion}

We have shown in which sense (generalized) triposes in the sense of \cite{Pit81}
may be understood as a generalizations of localic geometric morphisms. The
traditional triposes of \cite{HJP} can be characterized as those triposes $F :
\Sm\to\E$ for which the fibered preorder $F^*\Sub_\E$ admits a generic family $t
: T \mono F\Sigma$.

We have defined regular triposes as triposes $F : \Sm\to\E$ where $F$ preserves
epis, i.e.\ $F^*\Sub_\E$ is a prestack. As opposed to ordinary triposes regular
triposes are known to be closed under composition, i.e.\ are closed under
iteration. A further advantage of regular triposes is that for a regular tripos
$F : \Sm\to\E$ regular triposes over $\E$ correspond to morphisms of regular
triposes from $F$ to some regular tripos $G : \Sm\to\F$, i.e.\ $H : F \to G$
such that $H : \E\to\F$ is a regular functor. Somewhat surprisingly, an
analogous result holds for traditional regular triposes as well. 

Finally, we have recalled a theorem due to A.~Miquel characterizing traditional
regular triposes in terms of implicative algebras generalizing the notion of
complete Heyting algebra and identified a notion of morphism between implicative
algebras corresponding to regular morphisms of triposes over $\Set$.

We think that the more general notion of tripos as introduced in \cite{Pit02} is
more natural since it corresponds to the class of first-order posetal
hyperdoctrines which give rise to toposes by ``adding subquotients''. Moreover,
the comprehension axiom characterizing them is more natural than the Skolemized
form postulated as an axiom in the definition of traditional triposes.

But restricting to regular triposes seems to be a good idea since the condition
is most natural from the point of view of fibered categories and, moreover,
allows one to identify regular tripos morphisms to $F : \Sm\to\E$ with regular
triposes over $\E$ as shown in Theorem~\ref{iteration1}.

We have shown that triposes $F,G : \Set\to\E$ need not be equivalent. But we do
not know whether such $F$ and $G$ are necessarily equivalent under the stronger
assumption that both $F$ and $G$ are traditional triposes. There is no
conceptual reason why this should hold in general but, alas, we have not been
able to find a counterexample so far.

\subsection*{Acknowledgements}

We thank A.~Miquel for making an early version of \cite{Miq18} available to us.
The second named author thanks S.~Maschio for discussions which have triggered
the identification of the right notion of morphism between triposes. We further
acknowledge the use of Paul Taylor's diagram macros used for writing this paper.
The first named author gratefully acknowledges support by the Air Force Office
of Scientific Research through grant FA9550-20-1-0305 and MURI grant
FA9550-15-1-0053.

\newpage
\nocite{benabou1980}
\bibliographystyle{amsalpha}
\bibliography{biblio}

\end{document}